\begin{document}

\title*{Algebraic Semigroups are Strongly $\pi$-regular}

\author{Michel Brion and Lex E. Renner}
\institute{Michel Brion 
\at Institut Fourier, Universit\'e de Grenoble, France\\
\email{Michel.Brion@ujf-grenoble.fr}
\and Lex E. Renner
\at Department of Mathematics, The University of Western Ontario, Canada\\
\email{lex@uwo.ca}}

\maketitle



\abstract{Let $S$ be an algebraic semigroup (not necessarily linear) 
defined over a field $F$. We show that there exists a positive integer 
$n$ such that $x^n$ belongs to a subgroup of $S(F)$ for any $x \in S(F)$. 
In particular, the semigroup $S(F)$ is strongly $\pi$-regular.
\keywords{algebraic semigroup, strong $\pi$-regularity.\\
MSC classes: 20M14, 20M32, 20M99.}}

\section{Introduction}   
\label{sec:intro}

A fundamental result of Putcha (see \cite[Thm.~3.18]{Put}) states that any 
{\em linear} algebraic semigroup $S$ over an algebraically closed field $k$
is strongly $\pi$-regular. The proof follows from the corresponding result 
for $M_n(k)$ (essentially the Fitting decomposition), combined with the fact 
that $S$ is isomorphic to a closed subsemigroup of $M_n(k)$, for some $n>0$. 
At the other extreme it is easy to see that any {\em complete} algebraic 
semigroup is strongly $\pi$-regular. It is therefore natural to ask
whether {\em any} algebraic semigroup $S$ is strongly $\pi$-regular. 
The purpose of this note is to provide an affirmative answer to this 
question, over an arbitrary field $F$; then the set $S(F)$ of points of 
$S$ over $F$ is an abstract semigroup (we shall freely use the terminology 
and results of \cite[Chap.~11]{Spr} for algebraic varieties defined over a field).

\section{The Main Results}   
\label{sec:result}

\begin{theorem}\label{thm:main}
Let $S$ be an algebraic semigroup defined over a subfield $F$ of $k$.
Then $S(F)$ is strongly $\pi$-regular, that is for any 
$x \in S(F)$, there exists a positive integer $n$ and an idempotent 
$e \in S(F)$ such that $x^n$ belongs to the unit group of $eS(F)e$.
\end{theorem}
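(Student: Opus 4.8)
The plan is to pass, through a chain of reductions, to the case of a commutative algebraic monoid, where the statement becomes almost formal; I would carry everything out over $k$ first and deal with $F$-rationality only at the end. First, fix $x \in S(F)$ and replace $S$ by the Zariski closure $T = \overline{\langle x\rangle}$ of the subsemigroup generated by $x$. This $T$ is a commutative closed subsemigroup of $S$, which is $F$-closed in the sense of \cite[Chap.~11]{Spr}; and since any idempotent $e$ is the identity of $eSe$, an element invertible in $eTe$ is invertible in $eSe$, so it suffices to prove the statement for $(T,x)$. We may thus assume $S$ is commutative and $S = \overline{\langle x\rangle}$. As $S$ is Noetherian, the descending chain $\overline{\{x^m : m \ge n\}}$ (in $n$) stabilises; replacing $S$ by the stable subsemigroup and $x$ by a power, we may further assume $\overline{\{x^m : m \ge n\}} = S$ for all $n$, so that each $x^n S$ is constructible and dense in $S$. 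The sets $x^n S$ decrease, hence the closed sets $Z_n := \overline{S \setminus x^n S}$ increase; since a constructible dense set contains a dense open set, each $Z_n$ is a proper closed subset, and the chain $(Z_n)$ stabilises to a proper closed set $Z$. As $\{x^m : m \ge 1\}$ is dense, it is not contained in $Z$, so $x^m \notin Z$ for some large $m$, and then $x^m \in x^m S$, say $x^m = x^m s$. The set $\{w \in S : x^m w = x^m\}$ is therefore a non-empty closed subsemigroup, so it contains an idempotent $e$, and $x^m$ lies in the commutative algebraic monoid $eSe = \{a \in S : eae = a\}$ with identity $e$. If $\overline{\langle x^m\rangle} \subsetneq eSe$, repeat the argument with $(eSe, x^m)$ in place of $(S,x)$; every step replaces the ambient semigroup by a closed subsemigroup, so Noetherianity forces termination, and we are reduced to the case where $S$ is a commutative algebraic monoid with identity $e$ and $S = \overline{\langle x\rangle}$.

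In this final situation the unit group $S^{*}$ is open in $S$, so $S \setminus S^{*}$ is closed; and because in a commutative monoid a product is invertible if and only if both factors are, $S \setminus S^{*}$ is an ideal of $S$. If $x$ were not invertible, then $\langle x\rangle \subseteq S \setminus S^{*}$, whence $S = \overline{\langle x\rangle} \subseteq S \setminus S^{*}$, contradicting $e \in S^{*}$. Hence $x \in S^{*}$, so $x^{n} \in S^{*} = (eSe)^{*}$ for every $n \ge 1$. Unwinding the reductions --- each of which replaced $x$ only by a power of itself and $S$ only by an $F$-closed subsemigroup --- produces an idempotent $e$ and an integer $n$ with $x^{n}$ in the unit group of $eSe$ inside the original semigroup; this is the theorem with $S(k)$ in place of $S(F)$.

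To descend to $F$, observe that the idempotent $e$ obtained is uniquely determined by $x$: it is the identity of the unique maximal subgroup of $S(k)$ containing $x^{N}$ for all $N \gg 0$, and the two-sided inverse of $x^{n}$ in $S$ is unique as well. Thus $e$ and that inverse are fixed by $\mathrm{Gal}(k/F)$, and a standard descent argument in the framework of \cite[Chap.~11]{Spr} (with a little extra care when $F$ is imperfect) places them in $S(F)$, completing the proof.

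The step I expect to be the real obstacle is the production of the idempotent, that is, showing that some power of $x$ is invertible in some $eSe$. In Putcha's linear case this is immediate from the Fitting decomposition inside an ambient matrix algebra; here there is no ambient algebra, and its role must be played by the reduction to a commutative algebraic monoid together with the fact that the unit group of such a monoid is open. The remaining bookkeeping --- termination of the reductions, the existence of idempotents in a non-empty algebraic semigroup, and the descent to $F$ --- introduces no genuinely new ideas.
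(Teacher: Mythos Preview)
Your proposal has a genuine gap at exactly the point you identify as ``the real obstacle''. The claim that the increasing chain $Z_n := \overline{S \setminus x^n S}$ stabilises is not justified: a Noetherian topological space satisfies the \emph{descending} chain condition on closed subsets, not the ascending one, and increasing sequences of proper closed subsets need not stabilise (think of finite sets of points in affine space). Without stabilisation you cannot pick $m$ large with $x^m \notin Z_m$, so you cannot conclude $x^m \in x^m S$, and the construction of the idempotent $e$ collapses. Everything downstream (the iteration, the ``final situation'') depends on this step, so the argument does not go through as written. The paper's proof of this step is genuinely different and is the core of the whole argument: after reducing to $S$ irreducible with $xS$ dense, it runs an induction on $\kappa(S) := \dim(S) - \dim(e_S S)$, where $e_S$ is the minimal idempotent of $S$; the inductive step uses the partially ordered set of ideals $I_y := \overline{y^{d}S}$ (with $d = \dim S + 1$) to produce an \emph{open} subsemigroup $T \subsetneq S$ containing $x$ with $\kappa(T) < \kappa(S)$.

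There is a second, softer gap in the descent to $F$. Over an imperfect field, invariance under $\mathrm{Gal}(k/F)$ does not by itself force a $k$-point to lie in $S(F)$; your ``little extra care'' is a real argument that you have not supplied. The paper handles this concretely: it realises $e$ as the image under an $F$-morphism of the fibre $\phi^{-1}(x^n)$, where $\phi(y,z) = x^n y z$; this fibre is isomorphic to the unit group, hence smooth, and the differential of $\phi$ at $(e,e)$ is surjective, so \cite[Cor.~11.2.14]{Spr} applies and gives $F$-rationality without any separability hypothesis.
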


\begin{proof}
\smartqed
We may replace $S$ with any closed subsemigroup 
defined over $F$ and containing some power of $x$. Denote by 
$\langle x \rangle$ the smallest closed subsemigroup of $S$ containing $x$,
that is, the closure of the subset $ \{ x^m, m >0 \}$; then 
$\langle x \rangle$ is defined over $F$ by \cite[Lem.~11.2.4]{Spr}. 
The subsemigroups $\langle x^n \rangle$, $n > 0$, form a family 
of closed subsets of $S$, and satisfy  
$\langle x^{mn} \rangle \subseteq 
\langle x^m \rangle \cap \langle x^n \rangle$. 
Thus, there exists a smallest such semigroup, say $\langle x^{n_0} \rangle$.
Replacing $x$ with $x^{n_0}$, we may assume that 
$S = \langle x \rangle = \langle x^n \rangle$ for all $n > 0$.

\begin{lemma}\label{lem:semi}
With the above notation and assumptions, $x S$ is dense in $S$. Moreover,
$S$ is irreducible.
\end{lemma}

\begin{proof}
\smartqed
Since $S = \langle x^2 \rangle$, the subset $ \{ x^n, n \geq 2 \}$
is dense in $S$. Hence $x S$ is dense in $S$ by an easy observation
(Lemma \ref{lem:dense}) that we will use repeatedly.

Let $S_1, \ldots,S_r$ be the irreducible components of $S$. 
Then each $x S_i$ is contained in some component $S_j$. Since $xS$ 
is dense in $S$, we see that $x S_i$ is dense in $S_j$. In particular, 
$j$ is unique and the map $\sigma : i \mapsto j$ is a permutation. By induction,
$x^n S_i$ is dense in $S_{\sigma^n(i)}$ for all $n$ and $i$; thus $x^n S_i$
is dense in $S_i$ for some $n$ and all $i$. Choose $i$ such that 
$x^n \in S_i$. Then it follows that $x^{mn} \in S_i$ for all $m$.
Thus, $\langle x^n \rangle \subseteq S_i$, and $S = S_i$ is irreducible.
\qed
\end{proof}

\begin{lemma}\label{lem:mono}
Let $S$ be an algebraic semigroup and let $x \in S$. 
Assume that $S = \langle x \rangle$ (in particular, $S$ is commutative), 
$xS$ is dense in $S$, and $S$ is irreducible. Then $S$ is a monoid 
and $x$ is invertible.
\end{lemma}

\begin{proof}
\smartqed
For $y \in S$, consider the decreasing sequence 
\[
\cdots \subseteq \overline{y^{n+1} S} \subseteq \overline{y^n S} 
\subseteq \cdots \subseteq \overline{yS }\subseteq S
\]
of closed, irreducible ideals of $S$. We claim that 
\[ \overline{y^d S} = \overline{y^{d+1} S}= \cdots, \] 
where $d := \dim(S) + 1$. Indeed, there exists $n \leq d$ such that
$\overline{y^{n+1} S} = \overline{y^n S}$, that is, 
$y^{n+1} S$ is dense in $\overline{y^n S}$. Multiplying by $y^{m-n}$
and using Lemma \ref{lem:dense}, it follows that $y^{m+1} S$ is dense 
in $\overline{y^m S}$ for all $m \geq n$ and hence for
$m \geq d$. This proves the claim.

We may thus set
\[
I_y := \overline{y^d S} = \overline{y^{d+1} S} = \cdots
\]
Then we have for all $y,z\in S$, 
\[
\overline{y^d I_z} = I_{yz} \subseteq I_z,
\]
since $y^d(z^d S) = (yz)^d S \subseteq z^d S$. Also, note that
$I_x = S$, and $I_e = e S$ for any idempotent $e$ of $S$. By
\cite[Sec.~2.3]{Br}, $S$ has a smallest idempotent $e_S$,
and $e_S S$ is the smallest ideal of $S$. In particular,
$e_S S \subseteq I_y$ for all $y$. Define 
\[
\mathscr{I} = \{ I \subseteq S\;|\; I = I_y\;\text{for some}\; y \in S\}.
\]
This is a set of closed, irreducible ideals, 
partially ordered by inclusion, with smallest element $e_SS$ 
and largest element $S$. If $S = e_S S$, then 
$S$ is a group and we are done. Otherwise, we may choose 
$I \in \mathscr{I}$ which covers $e_S S$ (since 
$\mathscr{I} \setminus \{ e_S S \}$ has minimal elements 
under inclusion). Consider
\[
T = \{ y\in S\:|\; yI \;\text{is dense in}\; I \}.
\]
If $y,z \in T$ then $\overline{yzI} = \overline{y\overline{zI}} = I$
and hence $T$ is a subsemigroup of $S$. Also, note that 
$T \cap e_S S = \emptyset$, since $e_S z I \subseteq e_S S$ is not dense
in $I$ for any $z \in S$. Furthermore $x \in T$. (Indeed, $x S$ is dense 
in $S$ and hence $x y^d S$ is dense in $\overline{y^dS}$ for all 
$y \in S$. Thus, $x \; \overline{y^d S}$ is dense in $\overline{y^d S}$;
in particular, $x I$ is dense in~$I$).

We now claim that 
\[
T = \{ y \in S\;|\; y^d I \not \subseteq e_S S \}.
\]
Indeed, if $y \in T$ then $y^d I$ is dense in $I$ and hence not contained
in $e_S S$. Conversely, assume that $y^d I \not \subseteq e_S S$ 
and let $z \in S$ such that $I = I_z$. Since
$\overline{y^d I} = \overline{y^d I_z} = I_{yz} \in \mathscr{I}$
and $\overline{y^d I} \subseteq I$, it follows that 
$\overline{y^d I} = I$ as $I$ covers $e_S S$.

By that claim, we have
\[ 
S\setminus T = \{ y \in S \;|\; y^d I \subseteq e_S S\}
= \{ y \in S \;|\; e_S y^d z = y^d z \; \text{for all} \; z \in I \}.
\]
Hence $S \setminus T$ is closed in $S$. Thus, $T$ is an open 
subsemigroup of $S$; in particular, $T$ is irreducible.
Moreover, since $x \in T$ and $xS$ is dense in $S$, it follows that
$x T$ is dense in $T$; also note that $\{ x^n, n >0 \}$ is dense in $T$.

Let $e_T \in T$ be the minimal idempotent, then $e_T \notin e_S S$ and hence 
the closed ideal $e_T S$ contains strictly $e_S S$. Since both are 
irreducible, we have 
$\dim(e_T T) = \dim(e_T S) > \dim(e_S S)$. Now the proof is completed 
by induction on $\kappa(S) := \dim(S) - \dim(e_S S)$.
Indeed, if $\kappa(S) = 0$, then $S = e_S S$ is a group. In the general
case, we have $\kappa(T) < \kappa(S)$. By the induction assumption, 
$T$ is a monoid and $x$ is invertible in $T$. As $T$ is dense in $S$, 
the neutral element of $T$ is also neutral for $S$, and hence $x$ 
is invertible in $S$.  
\qed
\end{proof}

By Lemmas \ref{lem:semi} and \ref{lem:mono}, there exists $n$ such
that $\langle x^n \rangle$ is a monoid defined over $F$, and $x^n$ is 
invertible in that monoid. To complete the proof of Theorem \ref{thm:main},
it suffices to show that the neutral element $e$ of $\langle x^n \rangle$
is defined over $F$. For this, consider the morphism
\[
\phi: S \times S \longrightarrow S, \quad (y,z) \longmapsto x^n y z.
\]
Then $\phi$ is the composition of the multiplication
\[
\mu : S \times S \longrightarrow S, \quad (y,z) \longmapsto yz
\]
and of the left multiplication by $x^n$; the latter is an
automorphism of $S$, defined over $F$. So $\phi$ is defined 
over $F$ as well, and the fiber $Z := \phi^{-1}(x^n)$ is isomorphic 
to $\mu^{-1}(e)$, hence to the unit group of $S$. In particular, 
$Z$ is smooth. Moreover, $Z$ contains $(e,e)$, and the tangent map 
\[ 
d\phi_{(e,e)} : T_{(e,e)} (S \times S) \longrightarrow T_{x^n} S 
\]
is surjective, since 
\[ 
d\mu_{(e,e)}: T_{(e,e)} (S \times S) = T_e S \times T_eS 
\longrightarrow T_e S 
\]
is just the addition. So $Z$ is defined over $F$ by 
\cite[Cor.~11.2.14]{Spr}. But $Z$ is sent to the point $e$ by $\mu$. 
Since that morphism is defined over $F$, so is $e$.
\qed
\end{proof} 

\begin{lemma}\label{lem:dense}
Let $X$ be a topological space, and $f : X \to X$ a continuous map. 
If $Y \subseteq X$ is a dense subset then $f(Y) \subseteq \overline{f(X)}$ 
is a dense subset.
\end{lemma}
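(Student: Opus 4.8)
The plan is to reduce everything to the single standard fact that a continuous map carries the closure of a set into the closure of its image: for any subset $A \subseteq X$ one has $f(\overline{A}) \subseteq \overline{f(A)}$. I would first recall why this holds, since it is the only nontrivial ingredient. Given $x \in \overline{A}$ and an open neighborhood $V$ of $f(x)$ in $X$, the set $f^{-1}(V)$ is an open neighborhood of $x$ by continuity, so it meets $A$; applying $f$ shows $V$ meets $f(A)$. Hence $f(x) \in \overline{f(A)}$.

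Next I would apply this with $A = Y$. Since $Y$ is dense, $\overline{Y} = X$, so the inclusion above reads $f(X) \subseteq \overline{f(Y)}$. Passing to closures gives $\overline{f(X)} \subseteq \overline{f(Y)}$. On the other hand $f(Y) \subseteq f(X)$ trivially, so $\overline{f(Y)} \subseteq \overline{f(X)}$. Combining the two inclusions yields $\overline{f(Y)} = \overline{f(X)}$, which is precisely the statement that $f(Y)$, viewed as a subset of $\overline{f(X)}$, is dense in it. (Here I am using that density of $f(Y)$ in $\overline{f(X)}$ means $\overline{f(Y)} \supseteq \overline{f(X)}$, together with the automatic $f(Y) \subseteq \overline{f(X)}$.)

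There is essentially no obstacle: the lemma is a routine point-set topology fact, stated separately only because it is invoked repeatedly in the proofs of Lemmas~\ref{lem:semi} and~\ref{lem:mono}. The only point deserving a word of care is keeping the two directions of inclusion straight, so that one does not accidentally claim $f(Y)$ is dense in $X$ itself (which is false in general, e.g. when $f$ is constant) rather than in $\overline{f(X)}$.
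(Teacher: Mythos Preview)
Your proof is correct and essentially the same as the paper's: both hinge on pulling back an open set via $f$ and using density of $Y$ to land a point of $Y$ in the preimage. The only difference is packaging---you route the argument through the standard fact $f(\overline{A}) \subseteq \overline{f(A)}$ and compare closures, whereas the paper checks directly that every nonempty open subset $U \subseteq \overline{f(X)}$ meets $f(Y)$.
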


\begin{proof}
\smartqed
Let $U \subseteq \overline{f(X)}$ be a nonempty open subset. 
Then $f^{-1}(U) \subseteq X$ is open, and nonempty since $f(X)$ is dense
in $\overline{f(X)}$. Hence $Y \cap f^{-1}(U) \neq \emptyset$. 
If $y \in Y\cap f^{-1}(U)$ then $f(y) \in f(Y)\cap U$. Hence 
$f(Y)\cap U \neq \emptyset$.
\qed
\end{proof}

\begin{remark}\label{rem:uni}
Given $x \in S$, there exists a {\em unique} idempotent $e = e(x) \in S$ 
such that $x^n$ belongs to the unit group of $e S e$ for some $n >0$. 
Indeed, we then have $x^n S x^n \subseteq e S e$; moreover, since
there exists $y \in eSe$ such that $x^n y = y x^n = e$, we also
have $e S e = x^n y S y x^n e \subseteq x^n S x^n$. Thus,
$x^n S x^n = e S e$. It follows that $x^{mn} S x^{mn}$ is a monoid 
with neutral element $e$ for any $m > 0$, which yields the desired
uniqueness.

In particular, if $x \in S(F)$ then the above idempotent $e(x)$ 
is an $F$-point of the closed subsemigroup $\langle x \rangle$. 
We now give some details on the structure of the latter 
semigroup. For $x,e,n$ as above, we have $x^n = ex^n = (ex)^n$, 
and $y(ex)^n = e$ for some $y \in H_e$ (the unit group of 
$e \, \langle x \rangle$). But then $e x \in H_e$ since $(y(ex)^{n-1})(ex) = e$. 
Thus, $ex^m = (ex)^m \in H_e$ for all $m > 0$. But if $m \geq n$ 
then $x^m = ex^m$. Thus, if $x \notin H_e$ then
there exists an unique $r > 0$ such that $x^r \notin H_e$ and $x^m \in H_e$ 
for any $m > r$. In particular, $x^r \in e \, \langle x \rangle$ for all 
$m \geq r$. Thus {\em we can write 
\[
\langle x \rangle = e \, \langle x \rangle \sqcup \{x,x^2,...,x^s \}
\]
for some $s < r$}. Notice also that {\em these $x^i$'s, with $i \leq s$, 
are all distinct} (if $x^i = x^j$ with $1 \leq i < j \leq s$, then 
$x^{i + s + 1 - j} = x^{s + 1} \in e \, \langle x \rangle$, a contradiction). 
Moreover, a similar decomposition holds for the semigroup of 
$F$-rational points.
  
The set $\{ ex^m, m > 0 \}$ is dense in $e \,\langle x \rangle$ 
by Lemma \ref{lem:dense}. But $ex^m = (ex)^m$, and $ex \in H_e$. 
So $e \, \langle x \rangle$ {\em is a unit-dense algebraic monoid}. 
Furthermore, if $\langle x^{m_0} \rangle$ is the 
smallest subsemigroup of $\langle x \rangle$ of the form 
$\langle x^m \rangle$, for some $m>0$, then $\langle x^{m_0} \rangle$ 
{\em is the neutral component of} $e \, \langle x \rangle$ 
(the unique irreducible component containing $e$). 
Indeed, $\langle x^{m_0} \rangle$ is irreducible by 
Lemma \ref{lem:semi}, and $y^{m_0} \in \langle x^{m_0} \rangle$ 
for any $y \in \langle x \rangle$ in view of Lemma \ref{lem:dense}. 
Thus, the unit group of $\langle x^{m_0} \rangle$ has finite index 
in the unit group of $\langle x \rangle$, and hence in that of 
$e \, \langle x \rangle$. 
\end{remark}

Finally, we show that Theorem \ref{thm:main} is self-improving
by obtaining the following stronger statement:  

\begin{corollary}\label{cor:gen}
Let $S$ be an algebraic semigroup. Then there exists $n > 0$ 
(depending only on $S$) such that $x^n \in H_{e(x)}$ for all $x \in S$, 
where $e : x \mapsto e(x)$ denotes the above map. Moreover, there 
exists a decomposition of $S$ into finitely many disjoint locally 
closed subsets $U_j$ such that the restriction of $e$ to each $U_j$ 
is a morphism.
\end{corollary}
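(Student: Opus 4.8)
\emph{Plan.} I would deduce both assertions from Theorem~\ref{thm:main} by a Noetherian induction, the engine being two applications of Chevalley's theorem on constructible images.

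\emph{The uniform exponent.} For $m\ge 1$ put
\[
S_m=\mathrm{pr}_1(\{(x,b)\in S\times S:\ x^m=x^{2m}b,\ \ x^mb=bx^m\})\subseteq S .
\]
Since an element $a$ of a semigroup lies in a subgroup iff $a=a^2b$ and $ab=ba$ for some $b$ (an elementary fact: if $a\in H_e$ take $b=a^{-1}$; conversely $ab$ is then an idempotent of which $a$ is a unit), $S_m$ is precisely the set of $x$ with $x^m$ in a subgroup of $S$, and it is constructible, being the image of a closed set under a morphism. Moreover $S_m\subseteq S_{m'}$ whenever $m\mid m'$ (a group is stable under powers), and $\bigcup_{m\ge 1}S_m=S$ by Theorem~\ref{thm:main} (with the uniqueness of Remark~\ref{rem:uni} identifying the subgroup as $H_{e(x)}$). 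I would then show by Noetherian induction on closed subsets $Z\subseteq S$ that $Z\subseteq S_m$ for some $m$; taking $Z=S$ gives the $n$ we want. In the inductive step one reduces to $Z$ irreducible (treat components separately otherwise). Replacing $k$ by a larger algebraically closed field we may assume $k$ uncountable --- this is harmless, because for a fixed point $x$ the property ``$x^m$ lies in a subgroup'' amounts to non-emptiness of the $k$-subscheme $\{b:\ x^{2m}b=x^m,\ x^mb=bx^m\}$, which is unaffected by such an extension (alternatively, in scheme language, use that a constructible set containing the generic point of an irreducible scheme is dense). Then $Z$ is not a countable union of proper closed subsets, so some constructible $Z\cap S_m$ is dense in $Z$, hence contains a dense open $U\subseteq Z$. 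By induction $Z\setminus U\subseteq S_{m'}$ for some $m'$, and then $Z\subseteq S_{m''}$ for any common multiple $m''$ of $m$ and $m'$.

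\emph{The stratification of $e$.} Fix $n$ with $x^n\in H_{e(x)}$ for all $x$. By Remark~\ref{rem:uni} we have $x^nSx^n=e(x)Se(x)$, a monoid in which $x^n$ is invertible; hence $e(x)$ is the \emph{unique} idempotent $f$ with $f\in x^nSx^n$ and $x^nf=fx^n=x^n$ (multiply $fx^n=x^n$ by the inverse of $x^n$). Introduce the closed subvariety
\[
\Gamma=\{(x,w)\in S\times S:\ (x^nwx^n)^2=x^nwx^n,\ \ x^n(x^nwx^n)=(x^nwx^n)x^n=x^n\}
\]
(note $x^nwx^n$ automatically lies in $x^nSx^n$). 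Its first projection to $S$ is surjective, and the morphism $q:\Gamma\to S$, $(x,w)\mapsto x^nwx^n$, is constant equal to $e(x)$ on the fibre over $x$, by the uniqueness above. Therefore the graph $\{(x,e(x)):x\in S\}=(\mathrm{pr}_1,q)(\Gamma)$ is constructible, being the image of a variety under a morphism, and from this one extracts a finite partition of $S$ into locally closed $U_j$ on which $e$ agrees with a morphism. In characteristic zero the last step is immediate: the bijective morphism $\{(x,e(x))\}\to S$ is an isomorphism over a dense open, and one finishes by Noetherian induction on the complement.

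\emph{The main obstacle.} The hard part will be exactly this last step in positive characteristic, where a bijective morphism need not be generically an isomorphism (Frobenius), so a constructible graph alone does not force $e$ to be piecewise a morphism. To get around this I would write $e(x)=x^n\cdot b(x)$, where $b(x)$ is the inverse of $x^n$ in the algebraic group $H_{e(x)}$, and argue that $b$ is piecewise a morphism: stratify $S$ so that the family of algebraic monoids $x^nSx^n$ (equivalently, of algebraic groups $H_{e(x)}$) becomes flat over each stratum, using generic flatness and Noetherian induction and cutting this family out of the incidence variety $\Gamma$ above so as not to invoke $e$ circularly; then apply relative inversion, which for a flat family of algebraic groups is a morphism, to the section $x\mapsto x^n$. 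This exhibits $b$, hence $e$, as a morphism over each stratum, and making the phrase ``the family $x^nSx^n$ varies morphically over a stratum'' precise is the technical heart of the argument.
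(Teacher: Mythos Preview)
Your approach is genuinely different from the paper's, and the difference is instructive.

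The paper exploits the fact that Theorem~\ref{thm:main} is proved over an \emph{arbitrary} field. For an irreducible subvariety $X\subseteq S$, it applies the theorem to the generic point $\xi\in S(k(X))$: this produces, directly, an idempotent $e\in S(k(X))$ and an inverse $y\in S(k(X))$ satisfying $e^2=e$, $e\xi^n=\xi^n e=\xi^n$, $ey=ye=y$, $\xi^n y=y\xi^n=e$. These are rational maps $X\dashrightarrow S$, hence honest morphisms on a dense open $U\subseteq X$, and the displayed identities hold pointwise on $U$. So in one stroke you get both a uniform exponent on $U$ and $e|_U$ a morphism, in any characteristic. Noetherian induction on the complement finishes, exactly as in your outline. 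No constructibility, no Chevalley, no flat families.

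Your uniform-exponent argument is correct. The uncountable-base-field trick works, and your parenthetical alternative (a constructible set containing the generic point of $Z$ is dense) is in fact the paper's idea in embryo: to see that some $S_m$ contains the generic point you must apply Theorem~\ref{thm:main} over $k(Z)$, and once you do that you may as well record the idempotent and inverse it hands you.

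For the stratification, your char~$p$ concern is real, and your proposed fix is where the gap lies. You want a group scheme over a stratum whose fibre at $x$ is $H_{e(x)}$, built without already knowing $e$ is a morphism. The candidate ``family'' $\{(x,z):z\in x^nSx^n\}$ is only the image of a morphism, hence a priori merely constructible; identifying it with the closed set $\{(x,z):e(x)z e(x)=z\}$ uses $e$. Carving the unit groups out of $\Gamma$ and invoking generic flatness may be salvageable, but the details (why the resulting flat family is a group scheme with the right fibres, and why $x\mapsto x^n$ lands in it as a section) are not supplied and are not routine. The function-field argument above dissolves the difficulty entirely: the inverse $y$ comes for free as a $k(X)$-point, so there is nothing to flatten.
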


\begin{proof}
\smartqed
We first show that for any irreducible 
subvariety $X$ of $S$, there exists a dense open subset $U$ of $X$
and a positive integer $n = n(U)$ such that $x^n \in H_{e(x)}$ for all 
$x \in U$, and $e\vert_U$ is a morphism. We will consider the semigroup
$S(k(X))$ of points of $S$ over the function field $k(X)$, and view 
any such point as a rational map from $X$ to $S$; the semigroup law on 
$S(k(X))$  is then given by pointwise multiplication of rational maps. 
In particular, the inclusion of $X$ in $S$ yields a point $\xi \in S(k(X))$ 
(the image of the generic point of $X$). By Theorem \ref{thm:main}, 
there exist a positive integer $n$ and points $e, y \in S(k(X))$
such that $e^2 = e$, $\xi^n e = e \xi^n = \xi^n$, $y e = e y = y$ and
$\xi^n y = y \xi^n = e$. Let $U$ be an open subset of $X$ on which 
both rational maps $e,y$ are defined. Then the above 
relations are equalities of morphisms $U \to S$, where $\xi$ is the 
inclusion. This yields the desired statements.

Next, start with an irreducible component $X_0$ of $S$ and let 
$U_0$ be an open subset of $X_0$ such that $e\vert_{U_0}$ is a 
morphism. Now let $X_1$ be an irreducible component of 
$X_0 \setminus U_0$ and iterate this construction. This yields
disjoint locally closed subsets $U_0,U_1,\ldots, U_j,\ldots$ such that 
$e\vert_{U_j}$ is a morphism for all $j$, and 
$X \setminus (U_0 \cup \cdots \cup U_j)$ is closed for all $j$. 
Hence $U_0 \cup \cdots \cup U_j = X$ for $j \gg 0$.
\qed
\end{proof}

\end{document}